\documentclass[12pt,reqno]{amsart}

\usepackage{graphicx}
\usepackage{fullpage}
\usepackage{etoolbox}
\usepackage[inline]{enumitem}
\usepackage[utf8]{inputenc}
\usepackage{amsmath}  
\usepackage{amssymb}     
\usepackage{amsthm}  
\usepackage{bbm}    
\usepackage{bm}
\usepackage{accents}
\usepackage{mathrsfs}
\usepackage{mathtools}
\usepackage{fixmath}
\usepackage{fullpage}
\usepackage{tikz}
\usepackage{microtype}
\usepackage{standalone}
\usepackage{cite}
\usepackage{float}
\usepackage{cases}
\usepackage[colorlinks,linkcolor=blue,citecolor=magenta]{hyperref}
\usetikzlibrary{patterns.meta}

\newtheorem{theorem}{Theorem}[section]
\newtheorem{proposition}[theorem]{Proposition}

\newtheorem{lemma}[theorem]{Lemma}

\theoremstyle{definition}

\renewcommand{\geq}{\geqslant}
\renewcommand{\leq}{\leqslant}
\newcommand{\conv}{\operatorname{conv}}

\def\ds{\displaystyle}
\def\R{\mathbb{R}}
\def\x{\boldsymbol{x}}

\title{Bucket Brigades: Uniqueness of the Fixed Point and Three-Worker Asymptotics}

\author{Yasser Alghouass}
\address{Y. Alghouass, École polytechnique, Institut Polytechnique de Paris, Palaiseau, France }
\email{yasser.alghouass@polytechnique.edu}

\author{Abderhamane Driouch}
\address{A. Driouch, École polytechnique, Institut Polytechnique de Paris, Palaiseau, France }
\email{abderrahmane.driouch@polytechnique.edu}

\author{Mohammed Lagmah}
\address{M. Lagmah, École polytechnique, Institut Polytechnique de Paris, Palaiseau, France }
\email{mohammed.lagmah@polytechnique.edu}

\author{Frédéric Meunier}
\address{F. Meunier, CERMICS, ENPC, Institut Polytechnique de Paris, Marne-la-Vallée, France}
\email{frederic.meunier@enpc.fr}

\subjclass[2020]{37N40, 90B30}

\keywords{bucket brigades; asymptotic behavior; production line}

\begin{document}

\begin{abstract}
A standard organization of production lines exhibiting self-balancing behavior is given by bucket brigades. Their study in operations research was initiated by the foundational work of Bartholdi and Eisenstein ({\em Operations Research}, 1996), where a simplified version of the model is considered. Their main result shows that when workers are ordered from the slowest to the fastest, the system is stable and converges to a ``fixed point,'' where each worker oscillates between two limiting positions. They also observe that the dynamics can become highly complex when this ordering condition is not satisfied. The {\em no-station} setting, in which work is distributed continuously and uniformly along the production line, is given special attention in their work. In a subsequent paper with Bunimovich ({\em Operations Research}, 1999), they characterize all stable behaviors of this setting for up to three workers.

In this work, we extend their analysis for three workers beyond the stable regime, providing a complete description when workers are ordered from the fastest to the slowest. We also show that, due to their restrictive notion of stability, some of their conclusions must be revisited. Finally, for an arbitrary number of workers, we prove that the fixed point is always unique in the no-station setting.
\end{abstract}

\maketitle

\section{Introduction}

\subsection{Context}
``Bucket brigades'' form a way to organize workers on a production line. In its simplest form, such an organization can be described as follows. Assume that the workers are numbered from $1$ to $n$ according to their sequence on the line. In the ideal version of the organization, the workers follow two rules:

\smallskip

    {\em Forward rule.} Worker $i$ remains devoted to a single item, processes it along the production line until the item is taken over by worker $i+1$ (or until worker $i$ reaches the end of the production line in case $i=n$), and then applies the backward rule. If worker $i$ catches up with worker $i+1$, then worker $i$ adjusts his velocity to that of worker $i+1$.

\smallskip
    
    {\em Backward rule.} Worker $i$ walks back to workers $i-1$ and takes over its item (or walks back to the start of the line and begins a new item in case $i=1$), and then applies the forward rule.

\smallskip

Moreover, no overtaking is possible.

Bucket brigades are used in a variety of industries~\cite{armbruster2006bucket,bratcu2005survey} (often with some flexibility in the way the idealized organization described above is applied). They have been studied from various points of view; see, e.g., \cite{armbruster2007bucket,bartholdi2001performance,bukchin2025sequencing,koo2009use,Wang2022Stochastic}. Bartholdi and Eisenstein~\cite{Bartholdi1996APL} have provided the first theoretical study, which remains the reference work on this topic. They show that even though their model is deterministic, it already captures a substantial part of the essence of this organization.  Under mild assumptions, they establish a variety of results, such as the existence of a ``fixed point,'' i.e., an initial position such that the workers are sent to the same position each time worker $n$ reaches the end of the production line. They also prove uniqueness of this fixed point when the workers are ordered from the slowest to the fastest.
Bartholdi, Bunimovich, and Eisenstein~\cite{Bartholdi1999} add to this foundational work a paper focusing on the two- and three-worker cases with fixed velocities, in which they aim at describing all possible asymptotic behaviors. Actually, as it is explicitly stated in their paper, they only care about ``stable'' behaviors, since only such behaviors can be observed in practice, and most of the arguments are informal or based on extensive experiments. The two-worker case has been completely solved by Gurumoorthy, Banerjee, and Paul~\cite{gurumoorthy2009dynamics} (especially thank to their introduction of the ``critical point,'' whose existence for more than two workers remain elusive).

In the original model of Bartholdi and Eisenstein~\cite{Bartholdi1996APL}, the production line is moreover subdivided into ``stations,'' whose presence further complicates the system's dynamics. As noted in their paper, the special setting where the stations are of negligible length, which we call {\em no-station}, is already of great interest. This is actually the situation considered in the paper by Bartholdi, Bunimovich, and Eisenstein~\cite{Bartholdi1999}.

Our contributions focus on the no-station setting and are twofold. First, we show that in this setting the fixed point is always unique, even when the workers are not ordered from the slowest to the fastest; the proof is remarkably short. Second, we complete the analysis of  Bartholdi, Bunimovich, and Eisenstein in the three-worker case. We provide proofs for several behaviors they identified, and we also investigate non-stable behaviors. This broader perspective shows that their neat conclusions do not necessarily extend beyond the stability assumption.
Moreover, we point out a limitation in their treatment of stability. They write at the very beginning of their paper:
\begin{quote}
By ``stable behavior'' we mean qualitative structure that persists, even in the presence of perturbations. This is the behavior that will assert itself in practice.
\end{quote}
However, their analysis is restricted to periodic behaviors, whereas we show that other qualitative structures may persist in a robust way without being periodic.

\subsection{Model} 

\subsubsection{No-station setting} We describe the model of Bartholdi and Eisenstein~\cite{Bartholdi1996APL} directly in the no-station setting. The state of the system can be represented as a vector $\x = (x_1,x_2,\ldots,x_n)$, with $0 \leq x_1 \leq x_2 \leq \cdots \leq x_n \leq 1$. The quantity $x_i$ represents the position of workers $i$ along the production line identified with the interval $[0,1]$. Each worker $i$ comes with a function $v_i \colon [0,1] \to \R_{>0}$: when worker $i$ has not caught up worker $i+1$, his velocity at position $x$ is $v_i(x)$. Otherwise, his velocity is equal to the velocity of worker $i+1$. The following assumptions are made for $v_i$: 
\begin{itemize}
    \item $v_i$ is continuous almost everywhere;
    \item there exist numbers $b$ and $B$ such that $0 < b < v_i(x) < B < +\infty$ for all $x \in [0,1]$.
\end{itemize}
Finally, the workers are assumed to move backward at an infinite velocity. This means that when worker $n$ reaches the end of the production line, then at the same instant, worker $n$ takes over from worker $n-1$, who takes over from worker $n-2$, and so on, and worker $1$ are sent back to the start of the production line to begin a new item. This corresponds to a {\em reset}.

\subsubsection{Equations}\label{subsubsec:equations}
The previous paragraph is unambiguous and provides an accurate description of the system's dynamics. We can however be formal and write explicit equations, as we show now. (The content of this subsection is for completeness only, and not used elsewhere in the paper.)

Denote by $\x^{(0)}$ the initial position of the workers (after the $0$th reset), and by $\x^{(k)}$ their position just after the $k$th reset. The dynamics of the system between the $k$th and $(k+1)$th resets can be written as follows. Let's see the position $\x=(x_1,x_2,\ldots,x_n)$ of the workers between the two resets as a function of the time $t \geq 0$. Then $\x(0)=\x^{(j)}$, and while $x_n(t) < 1$, the following relations hold: 
\[
\dot x_n = v_n(x_n) \quad \text{and} \quad
\dot x_i = \left \{ \begin{array}{ll} v_i(x_i) & \text{if $x_i < x_{i+1}$} \\[1ex]
\min(v_i(x_i),\dot x_{i+1}) & \text{if $x_i = x_{i+1}$}
\end{array}\right. 
\quad \text{for $i \in [n-1]$.}
\]
(Note that when $x_i = x_{i+1}$, if the minimum is attained only by the first term, this equality can hold only instantaneously, since it immediately ceases to be valid.) From this position of the workers seen as function of the time, we can recover the time $t^{(k)}$ elapsed between the $k$th and $(k+1)$th resets: it is the smallest $t$ such that $x_n(t) = 1$. From this, the reset can be expressed as $x_{i+1}^{(k+1)} = x_i(t^{(k)})$ for $i \in [n-1]$ and $x_1^{(k+1)} = 0$.

\subsubsection{Reset function}

A key object to study the dynamics of the system is the \emph{reset function} that, given the initial positions of the workers and assuming the first worker is located at the origin, returns the positions of the workers after the next reset. Note that, after this reset, the first worker is again located at the origin and that the behavior of the system is completely described by the iterates of the reset function.

Let $\triangle \coloneqq \{(x_2,x_3,\ldots,x_n) \colon 0 \leq x_2 \leq x_3 \leq \cdots \leq x_n \leq 1\}$. The positions of the workers after any reset is an element of $\triangle$ (up to removing the first component $x_1=0$). The whole behavior of the system is determined by the initial state. In particular, with an initial state of the form $(0,x_2,x_3,\ldots,x_n)$, the positions occupied by the workers just after the first reset is completely determined and of the form $(0,x'_2,x'_3,\ldots,x'_n)$. The reset function is the self-map of $\triangle$ mapping $(x_2,x_3,\ldots,x_n)$ to $(x'_2,x'_3,\ldots,x'_n)$. 
It is continuous~\cite[Appendix~A.1]{Bartholdi1996APL}.

\subsection{Main contributions}
As proved by Bartholdi and Eisenstein~\cite[Theorem~1]{Bartholdi1996APL}, the reset function always admits a fixed point, i.e., there always exists an initial position such that the workers are sent to the same position after each reset. Lemma~1 of their paper ensures that the fixed point is unique, provided that the workers are comparable with respect to their velocities, and that they are sequenced from the slowest to the fastest. (Worker $j$ is {\em faster} than worker $i$ if $\sup_{x\in [0,1]} \frac {v_i(x)} { v_j(x)} < 1$.)

Our first contribution shows that this condition is unnecessary when there are no station.

\begin{theorem}\label{thm:uniq-fixed-point}
    In the no-station setting, the fixed point of the reset function is always unique.
\end{theorem}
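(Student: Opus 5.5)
The plan is to show that at any fixed point no worker is ever blocked. Once that is known, the dynamics during one cycle decouple into $n$ independent free motions, and the fixed point is determined by a single scalar equation in the cycle time. For a worker $i$ moving freely, write $\tau_i(a,b)\coloneqq\int_a^b \frac{dx}{v_i(x)}$ for the time to go from $a$ to $b$. Since $b<v_i<B$, the map $b\mapsto\tau_i(a,b)$ is continuous and strictly increasing, and $\tau_i(a,a)=0$.

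\emph{Step 1 (all workers move strictly forward).} Fix a fixed point $(x_2,\ldots,x_n)$, set $x_1=0$ and $x_{n+1}=1$, and let $T>0$ be the time between two consecutive resets. Let $y_i(t)$, $t\in[0,T]$, be the position of worker $i$. From the equations of Section~\ref{subsubsec:equations}, worker $n$ has velocity $v_n>0$. By backward induction on $i$, worker $i$ has velocity either $v_i(y_i)$ or that of worker $i+1$, hence always at least $b>0$. So every $y_i$ is strictly increasing. The fixed-point property means exactly $y_i(T)=x_{i+1}=y_{i+1}(0)$ for $i\in[n-1]$, and $y_n(T)=1$. Consequently $x_i=y_i(0)<y_i(T)=x_{i+1}$ for every $i$.

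\emph{Step 2 (no catching up).} Suppose worker $i$ is at the position of worker $i+1$ at some time $s\in(0,T]$. Using that $y_i$ and $y_{i+1}$ are strictly increasing,
\[
y_i(s)\leq y_i(T)=y_{i+1}(0)<y_{i+1}(s),
\]
which contradicts $y_i(s)=y_{i+1}(s)$. At $s=0$ the two workers are distinct by Step~1. Hence worker $i$ never reaches worker $i+1$ and always moves at its own speed $v_i$. The fixed point is therefore characterized by
\[
\tau_i(x_i,x_{i+1})=T\quad\text{for all } i\in[n],\qquad x_1=0,\quad x_{n+1}=1 .
\]

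\emph{Step 3 (uniqueness).} For each $T>0$, define recursively $x_1(T)=0$ and $x_{i+1}(T)$ as the point with $\tau_i(x_i(T),x_{i+1}(T))=T$, extending $v_i$ by the constant $b$ beyond $1$ if needed so this is always defined. By induction, each $x_{i+1}(T)$ is continuous and strictly increasing in $T$: a larger $T$ gives a larger starting point $x_i(T)$ and a longer travel time, and $\tau_i$ is increasing in $b$ and decreasing in $a$. So $T\mapsto x_{n+1}(T)$ is strictly increasing, and the equation $x_{n+1}(T)=1$ has at most one solution. Two fixed points would give the same $T$, and hence the same positions $x_2,\ldots,x_n$. (Existence is already known from~\cite[Theorem~1]{Bartholdi1996APL}.)

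The main obstacle I expect is Step~2: a priori, blocking could occur midway through a cycle and then disappear again, which would make the fixed-point equations non-local. The inequality chain above, which uses strict monotonicity together with the matching condition $y_i(T)=y_{i+1}(0)$, is what rules this out. The rest is routine monotonicity.
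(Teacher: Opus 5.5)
Your proof is correct and follows essentially the same route as the paper: at a fixed point no worker is ever blocked (your Step~2 spells out the one-line justification the paper gives), so each worker moves freely for the common cycle time $T$, and strict monotonicity of the end positions in $T$ forces uniqueness. Your shooting formulation in $T$ is a direct version of the paper's contradiction argument comparing $\bar T$ and $T^\star$; the only unstated detail is $T>0$, which holds because $T=0$ would force $x_2=\cdots=x_n=0$ while also requiring $x_n=1$.
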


We provide the proof of this theorem in Section~\ref{subsec:proof-unique}.



Our other contributions concern the three-worker case, with constant velocities. When the velocities are constant, i.e., for each $i$, there exists $v_i \in \R_{>0}$ such that $v_i(x) \coloneqq v_i$ for all $x \in [0,1]$, the unique fixed point of the reset function is
    \[
    \frac 1 {\sum_{i=1}^n v_i} \Bigl(v_1,v_1+v_2,\ldots, v_1 + v_2 + \cdots + v_{n-1} \Bigr) \, ,
    \]
as it can be readily checked.

Bartholdi, Bunimovich, and Eisenstein~\cite{Bartholdi1999} study thoroughly the three-worker case with constant velocities. They subdivide $\R_{>0}^2$ into four regions, and describe the asymptotic behavior according to the region in which $(v_1/v_3,v_2/v_3)$ lies. These four regions are:
\[
\begin{array}{@{}l@{\quad\,}l@{}}
\text{Region 1} \coloneqq \{(x,y)\colon 0 < x \leq 1, \, 0 < y \leq x + 1 \} &
\text{Region 2} \coloneqq \{(x,y)\colon 0 < x \leq 1, \, x + 1 < y \} \\
\text{Region 3} \coloneqq \{(x,y)\colon 1 < x , \, 1 < y \} &
\text{Region $k$} \coloneqq \{(x,y)\colon 1 < x , \, 0 < y \leq 1\}\, .
\end{array}
\]
(The placement of the large inequalities are from us since they are a bit imprecise about the boundary of the regions.) They mention in the introduction that they are only interested in stable behaviors. By stable behaviors, they mean periodic trajectories; see the discussion in Section~1 of their paper. 

For Region~1, they claim that there is convergence to the fixed point. It is not completely clear from the reading whether stability is required or not, and the way they got this conclusion is also unclear; see~Section~\ref{subsec:region1}. Concerning Region~2, they state that ``. . . the positions of the workers after walkbacks eventually alternates between $(0,v_1/(v_1+v_3),1)$ and $(0,0,v_1/(v_1+v_3))$.'' This is only correct when stability is assumed.

\begin{proposition}\label{prop:region2}
    For every choice of constant velocities satisfying simultaneously $v_1 < v_3$ and $v_1+v_3 \leq v_2$, there exist infinitely many initial positions for which there is convergence to the fixed point.
\end{proposition}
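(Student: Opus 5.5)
The plan is to show that the fixed point $p=\frac{1}{v_1+v_2+v_3}(v_1,v_1+v_2)$ is a \emph{saddle} of the reset function. Since the reset function is affine near $p$, its stable direction then gives a whole segment of initial positions converging to $p$. Throughout, write $a\coloneqq v_1/v_3$ and $b\coloneqq v_2/v_3$. The hypotheses $v_1<v_3$ and $v_1+v_3\leq v_2$ become $a<1$ and $b\geq a+1$.

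\emph{Step 1: local form of the reset function.} Start from $(0,x_2,x_3)$. Suppose no worker catches up with the next one before worker $3$ reaches the end. The time elapsed is $t=(1-x_3)/v_3$, and the reset sends $(x_2,x_3)$ to
\[
L(x_2,x_3)=\bigl(a(1-x_3),\; x_2+b(1-x_3)\bigr).
\]
Worker $1$ never catches worker $2$, because $v_1<v_3<v_2$. Worker $2$ does not catch worker $3$ before the reset exactly when $x_2+v_2t\leq 1$. At $p$ this quantity equals $(v_1+v_2)/(v_1+v_2+v_3)<1$, so the inequality is strict. Since $p$ lies in the interior of $\triangle$, continuity gives a neighbourhood $U\subset\triangle$ of $p$ on which the reset function coincides with the affine map $L$. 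One checks directly that $L(p)=p$.

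\emph{Step 2: eigenvalues.} The linear part of $L$ is $\begin{pmatrix}0&-a\\1&-b\end{pmatrix}$, with characteristic polynomial $P(\lambda)=\lambda^2+b\lambda+a$. Both roots are real and negative, because $b^2\geq(a+1)^2>4a$ and $a,b>0$. At $\lambda=-1$ we get $P(-1)=1-b+a\leq 0$, while $P(0)=a>0$. Hence there is a root $\mu\in[-1,0)$, and the other root is $\leq -1$. In the boundary case $b=a+1$ the roots are $-1$ and $-a$, so $\mu=-a\in(-1,0)$. When $b>a+1$ we have $P(-1)<0$ and $\mu\in(-1,0)$. In all cases the larger root $\mu$ satisfies $|\mu|<1$. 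Let $w$ be an eigenvector for $\mu$.

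\emph{Step 3: invariant segment.} Choose $\varepsilon>0$ with $S\coloneqq\{p+sw : |s|\leq\varepsilon\}\subset U$. For a point $p+sw\in S$, Step~1 gives that its image under the reset function is $L(p+sw)=p+\mu s w$. This point lies in $S$ again, since $|\mu s|\leq|s|$. By induction, the $k$th iterate is $p+\mu^k s w$, which tends to $p$. Every point of the segment $S$, which is uncountable, is therefore an initial position converging to the fixed point.

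The only delicate point is Step~2, namely guaranteeing a contracting eigenvalue including on the boundary line $v_2=v_1+v_3$, where one eigenvalue equals $-1$; the explicit factorization $P(\lambda)=(\lambda+1)(\lambda+a)$ settles that case. The other thing to verify is that the no-catching-up condition holds strictly at $p$, which is the computation given in Step~1.
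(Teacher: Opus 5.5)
Your proposal is correct and follows essentially the same route as the paper. Near the fixed point the reset function is the affine map of the case where no one catches up, its linear part has two distinct real eigenvalues with one of modulus less than $1$, and the corresponding eigenline gives an invariant segment of initial positions converging to the fixed point. The only difference is cosmetic: the paper gets the contracting eigenvalue from $\det = r_1 < 1$, whereas you locate it via the sign of $P(-1)$, which also makes the boundary case $v_2 = v_1 + v_3$ explicit.
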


Experimentally, it seems that the only other possible behavior is the eventual alternation between $(0,v_1/(v_1+v_3),1)$ and $(0,0,v_1/(v_1+v_3))$ (the one asserted by Bartholdi, Bunimovich, and Eisenstein), but a proof remains elusive.

Concerning Region 3, they state that ``The positions of the workers after walkbacks eventually alternate between $(0,1,1)$, $(0,0,1)$, and $(0,0,0)$.'' This is not correct in full generality as shown by the following theorem, where we have set
\[
\theta \coloneqq \frac{v_1(v_2-v_3)}{v_1v_2-v_3^2} \qquad \text{and} \qquad \varphi \coloneqq \frac{v_2(v_1-v_3)}{v_1v_2-v_3^2} \, .
\]
(The quantity $\theta$ plays a special role in the study of the three-worker case.) Note that it considers the case when the workers are ordered from the fastest to the slowest.

\begin{theorem}\label{thm:region3}
  For every choice of constant velocities such that $v_3 \leq v_2 < v_1$, there are initial positions for each of the following asymptotic behaviors of the system:
    \begin{enumerate}[label=\textup{(\roman*)}]
    \item\label{fixed} it remains at the fixed point.
    \item\label{cycle} it reaches after finitely many resets the $3$-cycle $(0,1,1)$, $(0,0,1)$, $(0,0,0)$.
    \item\label{cycle-other} it reaches after finitely many resets the $3$-cycle $(0,\varphi,\varphi)$, $(0,\theta,1)$, $(0,0,\theta)$.
    \end{enumerate}
    There is no other asymptotic behavior.
\end{theorem}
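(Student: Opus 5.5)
The plan is to make the reset function completely explicit in this regime and reduce the dynamics to a one-dimensional piecewise-linear map. Write a state as $(0,a,b)$ with $0\le a\le b\le 1$, and set $r_1\coloneqq v_1/v_3$ and $r_2\coloneqq v_2/v_3$, so that $r_1>1$ and $r_1>r_2\ge 1$.

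Worker~$3$ never gets blocked, so the time to the next reset is $T=(1-b)/v_3$. Worker~$2$ is at least as fast as worker~$3$, so it catches worker~$3$ before the reset exactly when $a+v_2T\ge 1$. Worker~$1$ is faster than worker~$2$, so it may in turn catch worker~$2$, which may itself already be blocked. This splits $\triangle$ into a few polygonal pieces on each of which the reset function $F$ is affine:
\begin{itemize}
\item no catch-up: $F(a,b)=(r_1(1-b),\,a+r_2(1-b))$;
\item only worker~$2$ catches worker~$3$: the new third coordinate equals $1$;
\item only worker~$1$ catches worker~$2$: the two new coordinates coincide;
\item both catch-ups happen, with the order of the two events mattering.
\end{itemize}
The first step is to write down these pieces and the formulas for $F$ on each of them. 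The key structural observation is that whenever some catch-up occurs, the image lies on the edge $\{b=1\}$ or on the diagonal $\{a=b\}$ of $\triangle$.

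The second step disposes of the no-catch-up region, which contains the fixed point. There $F$ is affine with linear part
\[
\begin{pmatrix}0&-r_1\\ 1&-r_2\end{pmatrix},
\qquad\text{with characteristic polynomial } p(\lambda)=\lambda^2+r_2\lambda+r_1 .
\]
The determinant is $r_1>1$, and $p(-1)=1-r_2+r_1>0$.
\begin{itemize}
\item If the roots are complex, both have modulus $\sqrt{r_1}>1$.
\item If the roots are real, then $r_2^2\ge 4r_1>4$, so the vertex $-r_2/2$ lies left of $-1$. Together with $p(-1)>0$, both roots are $<-1$.
\end{itemize}
In either case the fixed point is a repeller for this affine piece. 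Hence an orbit that stays in the no-catch-up region forever must be the fixed point itself, which gives behavior~(i). Every other orbit eventually undergoes a catch-up, and after that reset it lies on the edge $\{b=1\}$ or on the diagonal $\{a=b\}$.

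The third step, which I expect to be the main obstacle, is to analyze the dynamics on the union $L$ of these two segments. I would do this through the first-return map of $F$ to $L$: points of $L$ may pass once through the no-catch-up region before returning to $L$, and the return map is a piecewise-affine map of a one-dimensional set. I would first verify directly the two cycles in the statement.
\begin{itemize}
\item For the cycle of~(ii): from $(0,0,0)$ all three workers are blocked behind worker~$3$ and reach $1$ together, giving $(0,1,1)$. From $(0,1,1)$ workers~$2$ and~$3$ reach $1$ while worker~$1$ travels $v_1/v_3>1$, so worker~$1$ catches them and the reset gives $(0,0,1)$ after checking where the takeovers land.
\item For the cycle of~(iii): check that $(0,\varphi,\varphi)$, $(0,\theta,1)$, $(0,0,\theta)$ satisfy the corresponding affine equations. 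These equations are exactly what produces the formulas for $\theta$ and $\varphi$.
\end{itemize}
Then I would show that the return map on $L$ is expanding on every piece except those feeding into the two cycles, for instance by computing its slopes in terms of $r_1$ and $r_2$. Consequently every orbit either lands in finitely many steps on one of the two cycles (the cycles are reached exactly, not merely approached, since pieces where the image collapses to a corner such as $(0,0)$ or $(1,1)$ act as absorbing maps) or is eventually mapped onto the fixed point. The delicate part is showing that no orbit can wander forever inside $L$: the candidate for an invariant set of the expanding pieces is a point that must be identified with one of the cycles, or a preimage of the fixed point, which in turn falls under~(i) after finitely many steps.

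Finally, existence of initial positions for each behavior is immediate: the fixed point itself for~(i), and the points $(0,0,0)$ and $(0,\theta,1)$ for~(ii) and~(iii). Exhaustiveness follows from the second and third steps combined.
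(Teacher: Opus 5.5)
Your first two steps follow the paper's proof: the same explicit piecewise-affine reset map, and the same observation that the linear part on the no-catch-up cell has both eigenvalues of modulus larger than $1$, so only the fixed point can stay there forever. Your root-location argument via $p(-1)>0$ is cleaner than the paper's. The gap is step~3. That step is where the theorem is actually decided, and you only announce it. The inference you propose is that the non-absorbing pieces of the return map are expanding, so the orbits that are never absorbed reduce to a single point. That is not valid as it stands: a piecewise expanding interval map with two or more non-absorbing branches can have a Cantor set of orbits that wander forever. What makes it work here is that there is exactly one non-absorbing affine branch, and you have to exhibit it. The paper does this on the edge $\{x=0\}$, which is equivalent to your first return to $L$ since $(a,1)\mapsto(0,a)$:
\[
(0,y)\mapsto\bigl((1-y)r_2,(1-y)r_2\bigr)\mapsto\bigl(r_1(1-(1-y)r_2),1\bigr)\mapsto\bigl(0,\theta+r_1r_2(y-\theta)\bigr).
\]
The orbit is instead sent to $(1,1)$ as soon as $(1-y)r_2\geq 1$ (everyone catches up from $(0,y)$) or $(1-y)r_2\leq 1-1/r_1$ (everyone catches up from the diagonal point). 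The three-step map is affine with slope $r_1r_2>1$ and fixed point $\theta$. So every orbit other than that of $(0,\theta)$ leaves the non-absorbing range after finitely many rounds and falls into cycle~(ii), while $(0,\theta)$ gives cycle~(iii). Note also that ``expanding on every piece'' is false one step at a time when $v_2=v_3$. The branch $(a,1)\mapsto(0,a)\mapsto\bigl((1-a)r_2,(1-a)r_2\bigr)$ then has slope $-r_2=-1$, so you must work with the composite anyway.

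Two smaller points. First, you leave open the alternative ``eventually mapped onto the fixed point''; it is vacuous, and you should rule it out explicitly, since an orbit merely landing on the fixed point would not literally be behavior~(i). Images of catch-up cells lie in $L$, which does not contain the fixed point. The affine map on the no-catch-up cell is injective. So the fixed point has no preimage other than itself. Second, your verification of cycle~(ii) is wrong as written. From $(0,1,1)$, worker~$3$ is already at the end of the line, so the reset is instantaneous. If worker~$1$ caught up with workers~$2$ and~$3$ as you describe, the reset would return $(0,1,1)$ again, not $(0,0,1)$. The missing step $(0,0,1)\mapsto(0,0,0)$ is likewise an instantaneous reset.
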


Moreover, the asymptotic behavior in Region~3 can be significantly more complex when $v_1 < v_2$, as discussed in Section~\ref{sec:compl}. In particular, we identify a set of positions of positive measure that is stable in the following sense: whenever the initial positions lie in this set, the system remains in it for all time, and this property persists under small perturbations. Interestingly, within this set we exhibit very large periodic cycles, whereas Bartholdi, Bunimovich, and Eisenstein give the impression that such cycles can only occur in Region~$k$.


Concerning Region~$k$, the conclusions of the paper by Bartholdi, Bunimovich, and Eisenstein are mainly experimental and seem to remain correct even without assuming stability. However, in the section about this region, the authors claim ``. . . unlike the other regions, the asymptotic behavior can depend not only on the value of $(v_1/v_3,v_2/v_3)$ but also on the initial positions of the workers.'' As shown by Proposition~\ref{prop:region2} and Theorem~\ref{thm:region3}, this dependence to the initial positions also hold in Regions~2 and~3.

\section{Proofs}

\subsection{Proof of Theorem~\ref{thm:uniq-fixed-point}}\label{subsec:proof-unique}

\begin{proof}
    Consider two fixed points $(\bar x_2,\bar x_3, \ldots,\bar x_n)$ and $(x_2^\star,x_3^\star,\ldots,x_n^\star)$.

    We start by proving that $\bar x_n = x_n^\star$. Suppose thus for a contradiction that $\bar x_n \neq x_n^\star$. W.l.o.g., $\bar x_n < x_n^\star$. Denote by $\bar T$ and $T^\star$ the time between two resets for the first and second fixed points, respectively. We have $\bar T > T^\star$ (no infinite velocity). Since, at a fixed point, worker $2$ starts from the last position of worker $1$, the velocity of worker $1$ between two resets is always given by $v_1(x)$. The inequality $\bar T > T^\star$ implies thus that $\bar x_2 > x_2^\star$. Similarly, worker $3$ starts from the last position of worker $2$, which makes that the velocity of worker $2$ between two resets is given by $v_2(x)$. The inequality $\bar T > T^\star$ and $\bar x_2 > x_2^\star$ imply then that $\bar x_3 > x_3^\star$. And so on. Hence, $\bar x_n > x_n^\star$, a contradiction.

    Therefore $\bar x_n = x_n^\star$. The time between two resets is then the same for the two fixed points. As already used in the previous paragraph, at a fixed point, worker $i+1$ starts from the last position of worker $i$, and the velocity of worker $i$ between two resets is always given by $v_i(x)$. Applying this for worker $1$ gives $\bar x_2 = x_2^\star$, and repeating this argument successively for all workers shows that the two fixed points are actually identical.
\end{proof}

\subsection{Proofs of the three-worker results}\label{subsec:three-workers}

In the three-worker case, the reset function is a self-map of $\triangle = \{(x,y)\colon 0 \leq x \leq y \leq 1\}$, where $x$ has to be interpreted as the position of the second worker, and $y$ as the position of the third worker. In this section, we focus on the case where there are three workers and the velocities are constant.


Let $r_1 \coloneqq v_1/v_3$ and $r_2 \coloneqq v_2/v_3$, and consider the self-map of $\triangle$ defined by
\begin{numcases}{(x,y) \longmapsto}
\begin{pmatrix} 1 \\ 1 \end{pmatrix} & \text{if $\ds{1 \leq \min\bigl(x+(1-y)r_2,(1-y)r_1\bigr).}$ \label{first}}\\[1ex]
\begin{pmatrix}
    0 & -r_1 \\ 0 & 0
\end{pmatrix}
\begin{pmatrix}
    x \\ y
\end{pmatrix} + 
\begin{pmatrix}
    r_1 \\ 1
\end{pmatrix}
& \text{if $\ds{(1-y)r_1 < 1\leq x+ (1-y)r_2.}$}\label{second}
\\[1ex]
\begin{pmatrix}
    1 & -r_2 \\ 1 & -r_2
\end{pmatrix}
\begin{pmatrix}
    x \\ y
\end{pmatrix} + 
\begin{pmatrix}
    r_2 \\ r_2
\end{pmatrix}
& \text{if $\ds{x+(1-y)r_2 < \min\bigl(1,(1-y)r_1\bigr).}$ }\label{third} \\[1ex]
\begin{pmatrix}
    0 & -r_1 \\ 1 & -r_2
\end{pmatrix}
\begin{pmatrix}
    x \\ y
\end{pmatrix} + 
\begin{pmatrix}
    r_1 \\ r_2
\end{pmatrix}
& \text{if $\ds{(1-y)r_1 \leq x + (1-y)r_2  < 1.}$\label{fourth}}
\end{numcases}
The four cases~\eqref{first},~\eqref{second},~\eqref{third}, and~\eqref{fourth} partition $\triangle$ into four cells.

\begin{lemma}\label{lem:expr-reset}
   When there are three workers and the velocities are constant, the reset function is exactly the map given above.
\end{lemma}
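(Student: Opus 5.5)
We compute the reset directly. Start from $(0,x,y)\in\triangle$ and rescale time so that $v_3=1$; the velocities become $r_1,r_2,1$. Worker $3$ reaches the end of the line at time $T=1-y$, so the next reset occurs at $T$. The new third coordinate is the position of worker $2$ at time $T$, and the new second coordinate is the position of worker $1$ at time $T$. The new first coordinate is $0$.

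First we track worker $2$. If worker $2$ never catches worker $3$ before time $T$, his position at $T$ is $x+(1-y)r_2$. The catch-up happens exactly when this quantity is at least $1$. In that case worker $2$ is blocked behind worker $3$, and at time $T$ he sits at $1$. Hence the new third coordinate is $\min\bigl(1,x+(1-y)r_2\bigr)$. The blocking rule cannot push worker $2$ beyond worker $3$, and before catching up he moves at $r_2$. So this formula holds whether or not $r_2>1$; if $r_2\le 1$ no catch-up occurs before $T$ and the formula still gives the right answer.

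Next we track worker $1$, who starts at $0$ with speed $r_1$. Unobstructed, he would be at $(1-y)r_1$ at time $T$. He can only be blocked by worker $2$, and the trajectory of worker $2$, $t\mapsto\min(1,x+r_2t)$ capped by worker $3$, is concave and nondecreasing. The claim is that worker $1$ is at $\min\bigl((1-y)r_1,\; \min(1,x+(1-y)r_2)\bigr)$ at time $T$. The easy inequality is that his position is at most both quantities. For equality we argue as follows. If worker $1$ catches worker $2$ at some time $t_0\le T$, he then follows worker $2$, since both remaining speeds are at most $r_1$ when blocked. Therefore he coincides with worker $2$ at time $T$. If he never catches up, his position is $(1-y)r_1$, which is then at most the position of worker $2$. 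In both cases the minimum formula holds.

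It remains to match the four cells. In \eqref{first}, both quantities are at least $1$, so the new state is $(1,1)$. In \eqref{second}, worker $2$ ends at $1$ and worker $1$ at $(1-y)r_1=r_1-r_1y<1$, which is the stated affine map. In \eqref{third}, worker $2$ ends at $x+(1-y)r_2<1$ and worker $1$ is blocked, so both coordinates equal $x+r_2-r_2y$. In \eqref{fourth}, the new state is $\bigl((1-y)r_1,\;x+(1-y)r_2\bigr)$, matching the matrix form. The cases partition $\triangle$: comparing $1$ with $x+(1-y)r_2$ and with $(1-y)r_1$ gives four sign patterns, and they are exactly the four conditions, with ties assigned consistently because the formulas agree on the boundaries.

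The main obstacle is justifying the catch-up claim for worker $1$ rigorously, namely that once he meets worker $2$ he stays with him until time $T$. The planned argument is a comparison: after contact, the leading worker's speed is at most $r_1$ at all later times, because it equals $r_2$ or $1$ and the worker is blocked only when the worker ahead is slower. This comparison follows from the equations of Section~\ref{subsubsec:equations}. The same reasoning, applied to worker $2$ behind worker $3$, also rules out the possibility of a catch-up at an intermediate time being undone later.
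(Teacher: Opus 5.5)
Your argument is correct, but it is organized differently from the paper's. The paper goes cell by cell: for each of \eqref{first}--\eqref{fourth} it names the catch-up pattern and reads off the new positions, without showing why the defining inequalities force that pattern. You instead compute the new coordinates globally, $y'=\min(1,x+(1-y)r_2)$ and $x'=\min((1-y)r_1,\,1,\,x+(1-y)r_2)$. You then recognize the four cells as the regions where the various terms attain these minima.

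Your route makes rigorous exactly the step the paper states without justification. In \eqref{first} and \eqref{second}, worker~2's trajectory has a kink, so one must rule out that worker~1 is held back by worker~2 in mid-period and then released. One must also rule out that worker~1 fails to reach worker~2 when he should. The paper's ``worker~1 does not catch up with worker~3, thus moves at constant velocity $v_1$'' skips this. Your observation that $t\mapsto\min(y+t,x+r_2t)$ is concave, i.e.\ worker~2 only ever slows down, settles it. It also makes consistency across cell boundaries automatic. The paper's version is shorter and closer to the verbal description of the dynamics.

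Three small repairs are needed.
\begin{enumerate}
\item Worker~2's trajectory is $\min(y+t,x+r_2t)$, not ``$\min(1,x+r_2t)$ capped by worker~3''.
\item Contact alone does not imply following: if $x=0$ and $r_1<r_2$, workers~1 and~2 touch at $t_0=0$ and then separate. Set $g(t)\coloneqq\min(y+t,x+r_2t)-r_1t$. When $g(1-y)<0$, take $t_0$ to be the largest time in $[0,1-y]$ with $g(t_0)\geq 0$. Concavity then gives $g\geq 0$ on $[0,t_0]$, so worker~1 runs freely until $t_0$, and $g$ nonincreasing afterwards, so he then stays with worker~2.
\item The cells are not the four sign patterns of $1$ against each of $x+(1-y)r_2$ and $(1-y)r_1$. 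Cells \eqref{third} and \eqref{fourth} are separated by comparing $x+(1-y)r_2$ with $(1-y)r_1$, i.e.\ by which term attains the minimum defining $x'$.
\end{enumerate}
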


\begin{proof}
We consider each case in turn.

    Case~\eqref{first}. Workers~$1$ and~$2$ catch up with worker $3$. Just before reset, the three workers are located at position~$1$, and thus, just after reset, worker~$1$ is sent back to position~$0$, while the two others stay at position~$1$.

   Case~\eqref{second}. Worker~$1$ does not catch up with worker~$3$ and that worker~$2$ catches up with worker~$3$. Worker~$1$ moves thus at constant velocity~$v_1$ and reaches position $(1-y)r_1$ just before reset. The expression of the positions after reset is then immediate.

    Case~\eqref{third}. Worker~$1$ catches up with worker~$2$ and that worker~$2$ does not catch up with worker~$3$. Worker~$2$ moves thus at constant velocity~$v_2$ and reaches with worker~$1$ the position $x+(1-y)r_2$ just before reset. The expression of the positions after reset is then immediate.

    Case~\eqref{fourth}. Worker~$1$ does not catches up with worker~$2$ (except maybe just before reset), and that worker~$2$ does not catch up with worker~$3$ either. Thus, worker~$1$ moves at constant velocity~$v_1$ and worker~$2$ moves at constant velocity~$v_2$. They reach respectively positions $(1-y)r_1$ and $x+(1-y)r_2$ just before reset. The expression of the positions after reset is then immediate.
\end{proof}

We prove Proposition~\ref{prop:region2}.

\begin{proof}[Proof of Proposition~\ref{prop:region2}]
By Lemma~\ref{lem:expr-reset}, the fixed point \[
\Bigl(\frac {v_1} {v_1+v_2+v_3}, \frac {v_1+v_2} {v_1+v_2+v_3} \Bigr) = \Bigl(\frac {r_1} {r_1+r_2+1}, \frac {r_1+r_2} {r_1+r_2+1} \Bigr)
\]
satisfies the condition~\eqref{fourth} of the definition of the reset function with strict inequalities. 
By assumption, we have $r_1 + 1 \leq r_2$. This implies $4r_1 \leq r_2^2$, which means that the discriminant of the characteristic polynomial of the matrix
$\begin{psmallmatrix}
    0 & -r_1 \\ 1 & -r_2
\end{psmallmatrix}$ 
is non-negative, and actually even positive because $r_1 < 1$. This matrix has thus two distinct real eigenvalues. One of these eigenvalues is smaller than $1$ in absolute value because the determinant of the matrix is equal to $r_1 < 1$. This implies the following: there is a segment containing the fixed point in its relative interior that is mapped to itself by the reset function and whose points form initial positions with convergence to the fixed point.
\end{proof}

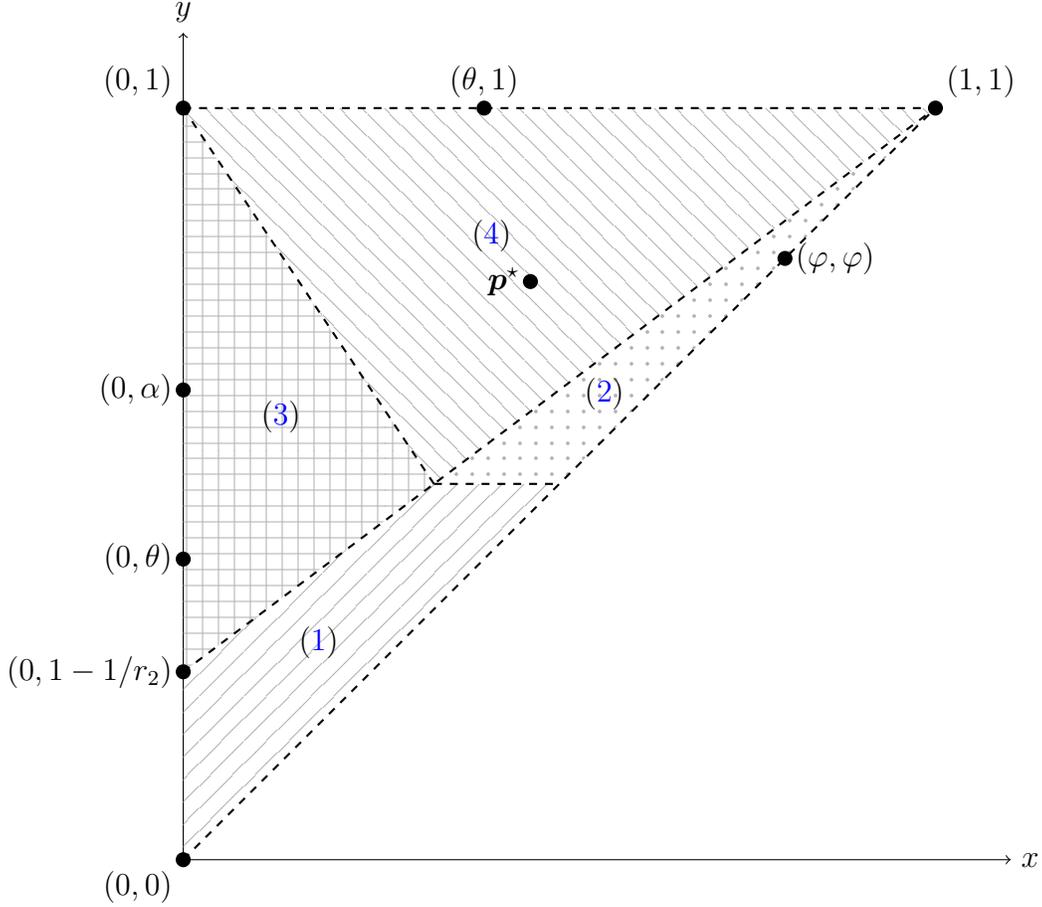
\begin{figure}
\begin{tikzpicture}[scale = 10]
  \draw[->] (0,0) -- (1.1,0) node[right] {$x$};
  \draw[->] (0,0) -- (0,1.1) node[above] {$y$};

  \fill[pattern={Lines[angle=45, distance=6pt]}, pattern color=black!30]
  (0,0) -- (0,1/4) -- (1/3,1/2) -- (1/2,1/2)-- cycle;

    \fill[pattern={Lines[angle=0, distance=6pt]}, pattern color=black!30]
  (0,1/4) -- (0,1) -- (1/3,1/2) -- cycle;
      \fill[pattern={Lines[angle=90, distance=6pt]}, pattern color=black!30]
  (0,1/4) -- (0,1) -- (1/3,1/2) -- cycle;

        \fill[pattern={Dots[radius=0.7pt, distance=6pt]}, pattern color=black!30]
  (1/3,1/2) -- (1,1) -- (1/2,1/2) -- cycle;
  
          \fill[pattern={Lines[angle=-45, distance=6pt]}, pattern color=black!30]
  (0,1) -- (1,1) -- (1/3,1/2) -- cycle;

  \draw[thick, dashed] (0,0) -- (1,1);
  
  \draw[thick, dashed] (1/3,1/2) -- (1/2,1/2);

   \draw[thick, dashed] (0,1) -- (1/3,1/2);

  \draw[thick, dashed] (0,1/4) -- (1,1);

  \draw[thick, dashed] (0,1) -- (1,1);

\node at (0.18,0.29) {$\eqref{first}$};

\node at (0.56,0.62) {$\eqref{second}$};

 \node at (0.13,0.59) {$\eqref{third}$};

\node at (0.41,0.83) {$\eqref{fourth}$};

  \fill (0,2/5) circle (0.01) node[left] {$(0,\theta)$};
    \fill (2/5,1) circle (0.01) node[above] {$(\theta,1)$};
      \fill (6/13,10/13) circle (0.01) node[left] {$\boldsymbol{p}^\star$};
  \fill (0,5/8) circle (0.01) node[left] {$(0,\alpha)$};
  \fill (4/5,4/5) circle (0.01) node[right] {$(\varphi,\varphi)$};
  \fill (0,0) circle (0.01) node[below left] {$(0,0)$};
    \fill (0,1) circle (0.01) node[above left] {$(0,1)$};
       \fill (1,1) circle (0.01) node[above right] {$(1,1)$};
          \fill (0,1/4) circle (0.01) node[left] {$(0,1-1/r_2)$};

\end{tikzpicture}
    \caption{\label{fig:321} Illustration of the proof of Theorem~\ref{thm:region3}. The position set $\triangle$ is subdivided into four cells corresponding to the behaviors~\eqref{first},~\eqref{second}, ~\eqref{third}, and~\eqref{fourth} of the reset map $f$. The point $\boldsymbol{p}^\star$ is the unique fixed point.
    }
\end{figure}

We give now the proof of Theorem~\ref{thm:region3}. Some elements of the proof are illustrated in Figure~\ref{fig:321}.

\begin{proof}[Proof of Theorem~\ref{thm:region3}]
We first establish that after finitely many iterations, we always reach a point of the form $(0,y)$, except when we start from the fixed point. To do so we proceed by a case distinction according to the four cases of the explicit expression of the reset function given by Lemma~\ref{lem:expr-reset}.

If some iteration reaches the case~\eqref{first}, then the next two iterations are of the form $(1,1)$, $(0,1)$. If some iteration reaches the case~\eqref{second}, then the next two iterations are of the form $(x,1)$, $(0,x)$. If some iteration reaches the case~\eqref{third}, then the next three iterations are of the form $(x,x)$, $(z,1)$, $(0,z)$. (For this latter case, we use $r_2 \geq 1$.)

We are left with the case~\eqref{fourth}. When $r_2^2-4r_1 \neq 0$, elementary computations show that the eigenvalues of the matrix $\begin{psmallmatrix} 0 & -r_1 \\ 1 & -r_2\end{psmallmatrix}$
have a modulus larger than $1$: 
\begin{itemize}
    \item in the case where $r_2^2-4r_1$ is negative, this is because the eigenvalues form a conjugate pair and have a product equal to $r_1 >1$; 
    \item  in the case where $r_2^2 -4r_1$ is positive, this results from the explicit computation of the eigenvalues that are then both real numbers (we have then $r_2 > 4$ and with $r_1 > r_2$, this leads to $r_2 - \sqrt{r_2^2-4r_1} > 2$).
\end{itemize}
 When $r_2^2-4r_1 = 0$, there is a unique (real) eigenvalue of norm larger than $1$ because its square is equal to $r_1 >1$.
All in all, this implies the following, assuming we start from a point distinct from the fixed point: if the iterates were always covered by case~\eqref{fourth}, then their norm would become arbitrarily large, contradicting the fact that the iterates remain in $\triangle$. Hence, the reset function cannot always be defined by case~\eqref{fourth}, and we come back to the previous cases.

\smallskip

We have therefore checked that for all possible initial positions, either we are on the fixed point, which corresponds to behavior~\ref{fixed}, or we reach a point of the form $(0,y)$ after finitely many resets. Now, we deal with three possible situations, depending on the relative position of $y$ with respect to $\theta = \frac{r_1(r_2-1)}{r_1r_2-1}$. We again rely on the explicit expression of the reset function given by Lemma~\ref{lem:expr-reset}.

Assume first that $y = \theta$. It is then straightforward to check that we are on a $3$-cycle:
\[
f(0,\theta) = (\theta,1) \, , \quad f(\theta, 1) = (\varphi,\varphi) \, , \quad f(\varphi,\varphi) = (0,\theta) \, .
\]
This corresponds to behavior~\ref{cycle-other}. 

Assume now $y < \theta$. We prove that after finitely many iterations, we reach a point of the form $(0,z)$ with $z \leq 1 - 1/r_2$. Suppose $(0,y)$ is not of this form. Since $r_2 < r_1$, we have $(1 - y)r_2 < \min(1, (1 - y)r_1)$. This places us in case~\eqref{third} of the reset function, so $f(0, y) = \bigl((1 - y)r_2, (1 - y)r_2\bigr)$. Since $y < \theta$, we are then in case~\eqref{second} and $f^2(0, y) = \bigl((1 - (1 - y)r_2)r_1, 1\bigr)$, and then in case~\eqref{fourth} and $f^3(0, y) = \bigl(0, (1 - (1 - y)r_2)r_1\bigr) = \bigl(0, \theta + (y - \theta)r_1r_2\bigr)$. Repeating the argument, we have $f^{3n}(0,y) = \bigl(0, \theta + (y-\theta)(r_1r_2)^n\bigr)$ as long as the second component is larger than $1 - 1/r_2$. Since $(r_1r_2)^n$ goes to $+\infty$ when $n\to +\infty$, there must exist an $n_0$ with $f^{3n_0}(0,y) = (0,z)$ of the desired form.
 We have then $f(0,z) = (1,1)$, again from the expression of the reset function. In other words, starting from $(0,y)$ with $y < \theta$, we reach behavior~\ref{cycle} after finitely many resets.

Assume finally that $y > \theta$. Restricted to the segment $[(0,\theta),(0,1)]$, the function $f$ is injective (given by case~\eqref{third}) and has its image located in the segment $[(0,0),(1,1)]$. Moreover, the image of $(0,1)$ is $(0,0)$, and the image of $(0,\theta)$ has its second coordinate larger than $1-1/r_1$. This implies that there is a unique point $(0,\alpha)$ in the segment $[(0,\theta),(0,1)]$ mapped to $(1-1/r_1, 1-1/r_1)$. If $y \geq \alpha$, then directly from the expression of the reset function (case~\eqref{first}), we get $f^2(0,y)=(1,1)$. So, we are left with the case where $y$ belongs to the interval $(\theta,\alpha)$. We have $f(0, y) = \bigl((1 - y)r_2, (1 - y)r_2\bigr)$, which places us in case~\eqref{second} ($y < \alpha$) and $f^2(0, y) = \bigl((1 - (1 - y)r_2)r_1, 1\bigr)$. We are then in case~\eqref{fourth} and $f^3(0, y) = \bigl(0, (1 - (1 - y)r_2)r_1\bigr) = \bigl(0, \theta + (y - \theta)r_1r_2\bigr)$. Repeating the argument, we have $f^{3n}(0,y) = (0, \theta + (y-\theta)(r_1r_2)^n)$, we have $f^{3n}(0,y) = (0, \theta + (y-\theta)(r_1r_2)^n)$ as long as the second component belongs to the interval $(\theta,\alpha)$. Since $(r_1r_2)^n$ goes to $+\infty$ when $n\to +\infty$, there must exist an $n_0$ with $f^{3n_0}(0,y) = (0,z)$ and $z \geq \alpha$. We have then $f(0,z) = (1,1)$, again from the expression of the reset function. In other words, starting from $(0,y)$ with $y > \theta$, we reach behavior~\ref{cycle} after finitely many resets.
\end{proof}

\section{Complementary discussion}\label{sec:compl}

\subsection{Case when \texorpdfstring{$v_3 < v_1 < v_2$}{v3 < v1 < v2}} In this case, even though we are still in Region~3, the behavior seems to be much more complicated. We still have initial positions for which the asymptotic behavior of Theorem~\ref{thm:region3} occurs. In particular, as already established by Bartholdi, Bunimovich, and Eisenstein, the following two asymptotic behaviors can be observed for some initial positions:
    \begin{enumerate}[label=\textup{(\roman*)}]
    \item\label{fixed-bis} it remains on the fixed point.
    \item\label{cycle-bis} it reaches after finitely many resets the standard $3$-cycle $(0,1,1)$, $(0,0,1)$, $(0,0,0)$.
    \end{enumerate}

They also established that behavior~\ref{cycle-bis} is the only stable cycle. Yet, experiments show that there are many initial positions for which the system reaches cycles, and even cycles of very large size, e.g., there are initial positions for which the cycle is of length $63{,}667$ when the speeds are given by $v_1 = 1.2$, $v_2 = 3$, $v_3 = 1$. This shows that not only ``Region~$k$ display the most interesting and complex dynamics.''

We emphasize that there is however some stability here, which was not discussed in Bartholdi, Bunimovich, and Eisenstein's paper. Indeed, there is a subset $\Sigma \subseteq \triangle$, which can occupy an arbitrarily large portion of $\triangle$ and in which the system stays when the initial position is taken in it. Moreover, since it is an open set for the induced topology on $\triangle$, small perturbations do not make the system leave $\Sigma$. This set $\Sigma$ is defined as follows.
Let
\[
\text{A} \coloneqq \bigl(0,r_1(1-\theta)\bigr) \, , \quad
\text{B} \coloneqq \bigl(r_1(1-\theta),r_1(1-\theta)\bigr)  \, , \quad
\text{C} \coloneqq \big(\theta,\theta\big)  \, , \quad
\text{D} \coloneqq (\theta / r_1,\theta)  \, ,
\]
\[
\text{E} \coloneqq \bigl(r_1(1-\theta),1\bigr)  \, , \quad
\text{F} \coloneqq \bigl(r_1(1-\theta),1\bigr)  \, , \quad
\text{G} \coloneqq (0,\theta)  \, , \quad
\text{H} \coloneqq (0,1-1/r_2)  \, ,
\]
and set
\[
\Sigma \coloneqq \Bigl(\conv\{\text{A},\text{B},\text{C},\text{D},\text{H}\} \cup \conv\{\text{E},\text{F},\text{G},\text{H},\text{D}\} \Bigr) \setminus \Bigl( [A,B] \cup [C,D] \cup [D,E] \cup [F,G] \Bigr) \, .
\]
From the expression of the reset function given in Section~\ref{subsec:three-workers}, it is then straightforward to check that $f(\Sigma) = \Sigma$. See Figure~\ref{fig:312} for an illustration.

\begin{figure}
\begin{tikzpicture}[scale = 10]
  \draw[->] (0,0) -- (1.1,0) node[right] {$x$};
  \draw[->] (0,0) -- (0,1.1) node[above] {$y$};

  \fill[pattern={Lines[angle=45, distance=6pt]}, pattern color=black!30]
  (0,0) -- (0,1/4) -- (1/4,1/4) -- cycle;

    \fill[pattern={Dots[radius=0.7pt, distance=6pt]}, pattern color=black!30]
  (0,1/4) -- (1/4,1/4) -- (1,1) -- (0,1/2) -- cycle;
  
        \fill[pattern={Lines[angle=-45, distance=6pt]}, pattern color=black!30]
  (1,1) -- (0,1/2) -- (0,1) -- cycle;

  \draw[thick, dashed] (0,0) -- (1,1);
  \draw[thick, dashed] (0,1/4) -- (1/4,1/4);

  \draw[thick, dashed] (0,1/2) -- (1,1);

  \draw[thick, dashed] (0,1) -- (1,1);

  \draw[thick] (0,4/15) -- (4/15,4/15);

  \draw[thick] (3/5,4/5) -- (4/5,4/5);

  \draw[thick] (3/5,4/5) -- (4/5,1);

  \draw[thick] (0,4/5) -- (4/15,1);

  \node at (0.09,0.17) {$\eqref{first}$};

  \node at (0.29,0.48) {$\eqref{second}$};

  \node at (0.36,0.84) {$\eqref{fourth}$};

  \fill (0,4/15) circle (0.01) node[left] {A};
  \fill (4/15,4/15) circle (0.01) node[below right] {B};
  \fill (4/5,1) circle (0.01) node[below right] {E};
  \fill (0,4/5) circle (0.01) node[below right] {G};
  \fill (4/15,1) circle (0.01) node[below right] {F};
  \fill (4/5,4/5) circle (0.01) node[below right] {C};
  \fill (3/5,4/5) circle (0.01) node[below right] {D};
  \fill (0,1/2) circle (0.01) node[below right] {H};
\end{tikzpicture}
    \caption{\label{fig:312} Illustration of the case~$v_3 < v_1 < v_2$. The position set $\triangle$ is subdivided into three cells corresponding to the behaviors~\eqref{first},~\eqref{second}, and~\eqref{fourth} of the reset map $f$ (in this case, behavior~\eqref{third} cannot occur). We have $f(A) = f(B) = E$, $f(C) = f(D) = F$, $f(E) =  G$, $f(F) = A$, $f(G) \in [A,B]$, and $f(H) \in (E,F)$.
    }
\end{figure}
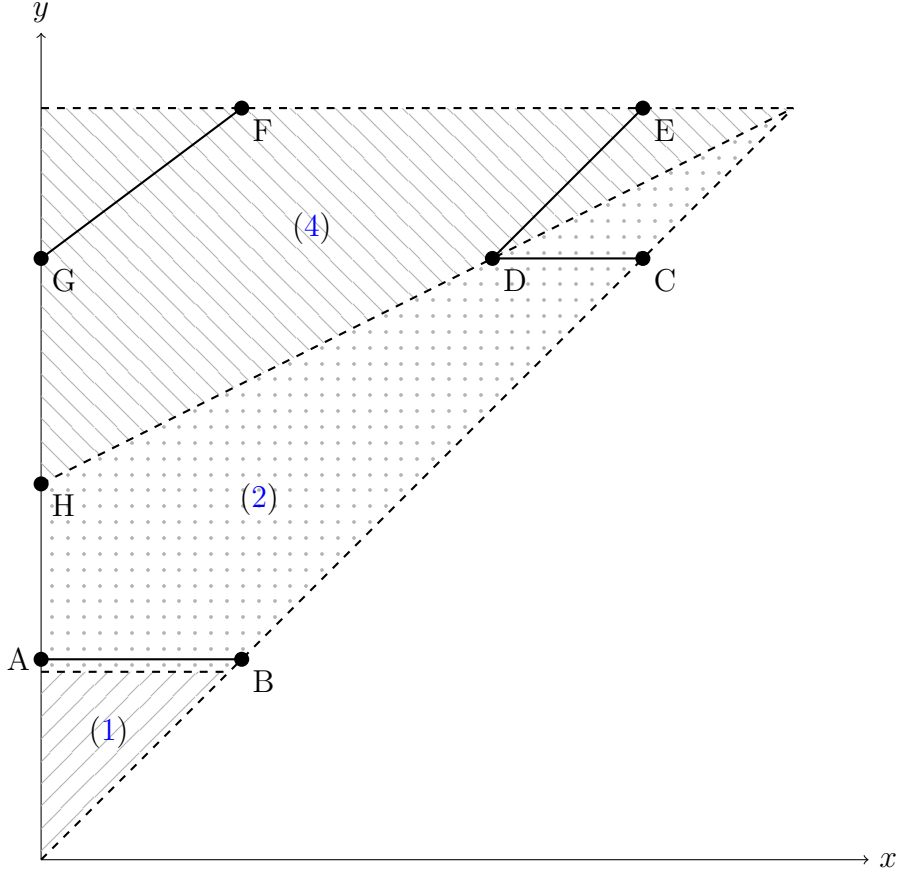

\subsection{Region~1}\label{subsec:region1}

Bartholdi, Bunimovich, and Eisenstein~\cite{Bartholdi1999} claim that the system always converges to the fixed point when the speeds lie in the ``Region 1,'' which is the set of speeds satisfying simultaneously $v_1 < v_3$ and $v_2 <  v_1 + v_3$. This is also confirmed by our own experiments. However, even though this is proved in the paper by Bartholdi and Eisenstein~\cite{Bartholdi1996APL} when $v_1 < v_2 < v_3$, they do not provide any proof for the other values in this region. This deserves future work.



\subsection{Four workers and beyond}

Even though we are able to describe quite precisely the asymptotic behavior of bucket brigades with three workers in many situations, we are far from providing a complete answer. Describing the asymptotic behavior of bucket brigades for four workers and more (still assuming constant velocities and instantaneous walk-balks) seem to be a challenging open problem.

\bibliographystyle{plain}
\bibliography{bucket-brigades}

\end{document}